\newtheorem{theorem}{Theorem}
\newtheorem*{thm}{Theorem}
\newtheorem*{proposition}{Proposition}
\newtheorem{corollary}{Corollary}
\newtheorem{lemma}{Lemma}
\theoremstyle{definition}
\theoremstyle{remark}
\begin{document}

\title[]{Positive-definite Functions, Exponential Sums and the Greedy Algorithm: a curious phenomenon}
\keywords{Low discrepancy sequence, Wasserstein distance, Exponential Sum, Erd\H{o}s-Tur\'an inequality, Green function, Irregularities of Distribution, Approximation Theory.}
\subjclass[2010]{11L03, 31C20, 35J08, 41A25, 42B05.}

\author[]{Louis Brown}
\address{Department of Mathematics, Yale University, New Haven, CT 06511, USA}
\email{louis.brown@yale.edu}

\author[]{Stefan Steinerberger}
\address{Department of Mathematics, Yale University, New Haven, CT 06511, USA}
\email{stefan.steinerberger@yale.edu}

\thanks{This paper is part of the first author's PhD thesis, he gratefully acknowledges Travel Support from Yale Graduate School. The second author is supported by the NSF (DMS-1763179) and the Alfred P. Sloan Foundation.}

\begin{abstract} We describe a curious dynamical system that results in sequences of real numbers in $[0,1]$ with seemingly remarkable properties. Let the even function $f:\mathbb{T} \rightarrow \mathbb{R}$ satisfy $\widehat{f}(k) \geq c|k|^{-2}$ and define a sequence via
$$ x_n = \arg\min_x \sum_{k=1}^{n-1}{f(x-x_k)}.$$
Such sequences $(x_n)_{n=1}^{\infty}$ seem to be astonishingly regularly distributed in various ways (satisfying favorable exponential sum estimates; every interval $J \subset [0,1]$ contains $\sim |J|n$ elements). We prove
$$ W_2\left( \mu, \nu\right) \leq \frac{c}{\sqrt{n}}, \quad \mbox{where}~\mu = \frac{1}{n} \sum_{k=1}^{n} \delta_{x_k}$$
is the empirical distribution, $\nu = dx$ is the Lebesgue measure and $W_2(\mu, \nu)$ is the 2-Wasserstein distance between these two.
Much stronger results seem to be true and it is an interesting problem to understand this dynamical system better. We obtain optimal results in dimension $d \geq 3$: using $G(x,y)$ to denote the Green's function of the Laplacian on a compact manifold, we show that 
$$ x_n = \arg\min_{x \in M} \sum_{k=1}^{n-1}{G(x,x_k)} \quad \mbox{satisfies} \quad W_2\left( \frac{1}{n} \sum_{k=1}^{n}{\delta_{x_k}}, dx\right) \lesssim \frac{1}{n^{1/d}}.$$
\end{abstract}

\maketitle

\section{A Curious Phenomenon}
\subsection{Introduction.} The purpose of this paper is to discuss a phenomenon that is connected to problems in Harmonic Analysis, Combinatorics, Number Theory and Approximation Theory. Perhaps the simplest instance of the phenomenon that we do not understand is the following: let $f:\mathbb{T} \rightarrow \mathbb{R}$ be an even function with mean value 0 and satisfy $\widehat{f}(k) \geq c |k|^{-2}$ for some fixed constant $c>0$ and all $k \neq 0$ (this is a quantitative form of the function being positive definite). A simple example of such a function is the second Bernoulli polynomial which, identifying $\mathbb{T} \cong [0,1]$, can be written as
$$ f(x) = x^2 - x + \frac{1}{6}.$$

 We now define a sequence of points by starting with an arbitrary initial set of points $\left\{x_1, \dots, x_m \right\} \subset [0,1]$ and then setting, in a greedy fashion,
$$ x_n = \arg\min_{x \in \mathbb{T}} \sum_{k=1}^{n-1}{f(x-x_k)}.$$
Here and throughout the paper we adopt the convention that if the minimum is not attained at a unique point, then any of the points can be chosen. The main contribution of our paper is to point out that such sequences seem
to enjoy \textit{remarkable} distribution properties; these observations are mainly empirical at this stage. We refer to the papers \cite{stein, stein2} for some numerical experiments (see also below). 
We summarize the existing results and derive some new ones; however, the overall phenomenon is largely unexplained.
\begin{enumerate}
\item \textbf{Open Problem 1.} Is it true that
$$ \sum_{k, \ell = 1}^{n}{ f(x_k - x_{\ell})} \lesssim \log{n}?$$
\item \textbf{Open Problem 2.} Is it true that
$$ \left\| \sum_{k=1}^{n}{f(x - x_k)} \right\|_{L^{\infty}} \lesssim \log{n}?$$
\end{enumerate}
Before embarking on a discussion of these problems, we quickly illustrate Open Problem 2 with a simple example. Indeed, Open Problem 2 can be stated in very simple terms. We fix again $ f(x) = x^2 - x + 1/6$ and obtain
a sequence by starting with $x_1 = 0.3, x_2 = 0.8$ and using the greedy algorithm to obtain all subsequent elements of the sequence. We also abbreviate
$$ f_n(x) = \sum_{k=1}^{n}{f(x-x_k)}.$$
\vspace{-5pt}
\begin{figure}[h!]
\begin{tikzpicture}[scale=1.2]
\node at(0,0) {\includegraphics[width = 8cm]{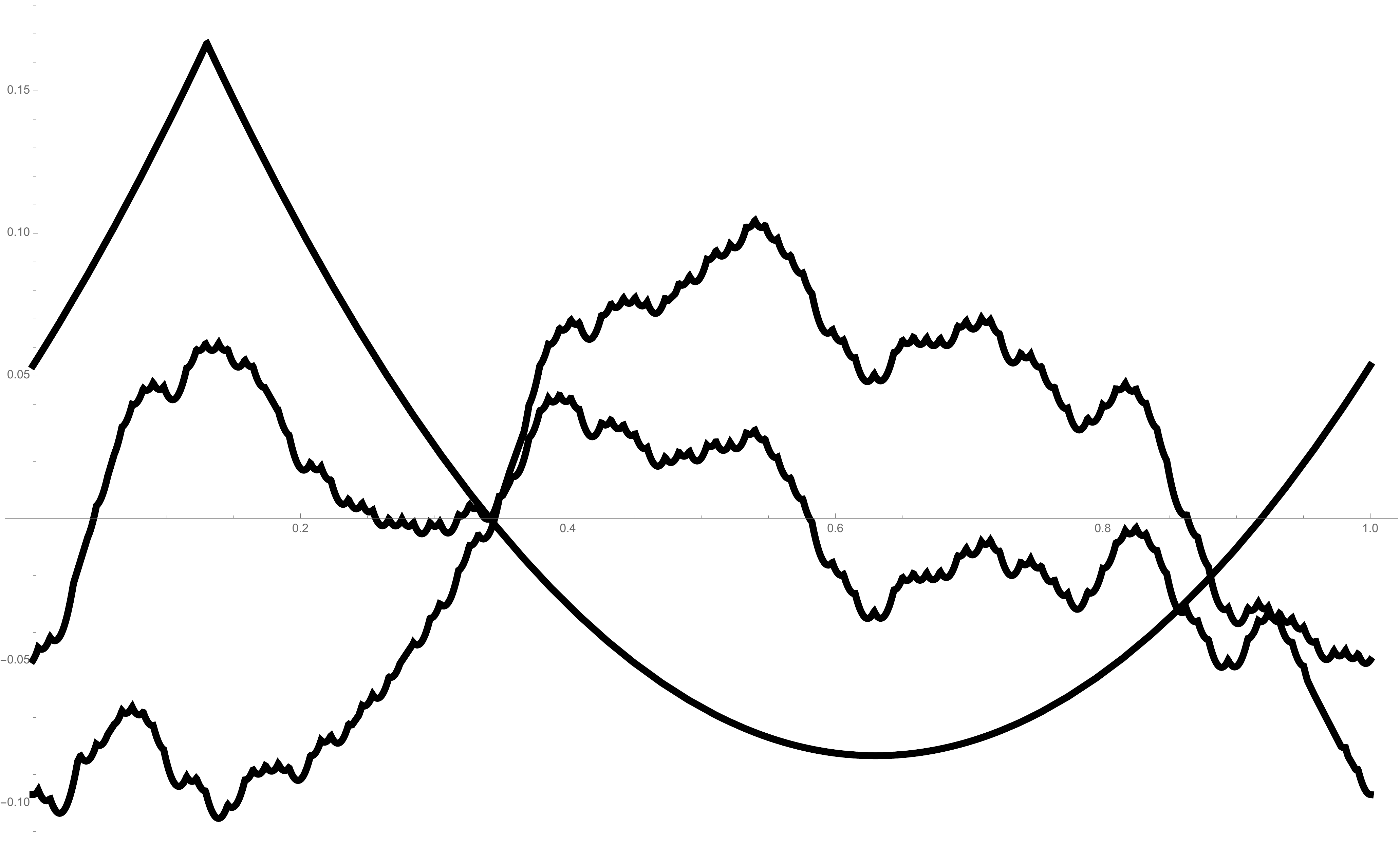}};
\node at (-1,-2.4) {$f_{101}$ has its minimum here};
\node at (-0.5,-1.5) {$f_{101}$};
\draw [thick, ->] (-0.8, -1.5) -- (-1.2,-1.4);
\draw[thick] (-2.3,-1.7) circle (0.4cm);
\node at (-3.8, -0.2) {$f_{102}$};
\draw [thick, ->] (-3.5,-0.2) -- (-2.9, -0.2);
\node at (-0.8, 1.5) {$f(x-x_{101})$};
\end{tikzpicture}
\caption{We obtain $f_{102}$ by finding the point $x_{101}$ at which $f_{101}$ assumes its minimum and $f_{102}(x) = f_{101}(x) + f(x-x_{101})$. }
\end{figure}

As seen in Figure 1, the function $f_n$ does not seem to be very large: this is only possible if the sequence elements are so regular that the sum over $f(x-x_k)$ leads to good cancellation properties. This is one instance of the `curious' phenomenon alluded to in the title: why is $\|f_n\|_{L^{\infty}}$ so remarkably small in $n$?

\begin{figure}[h!]
\begin{tikzpicture}[scale=1.2]
\node at(0,0) {\includegraphics[width = 8cm]{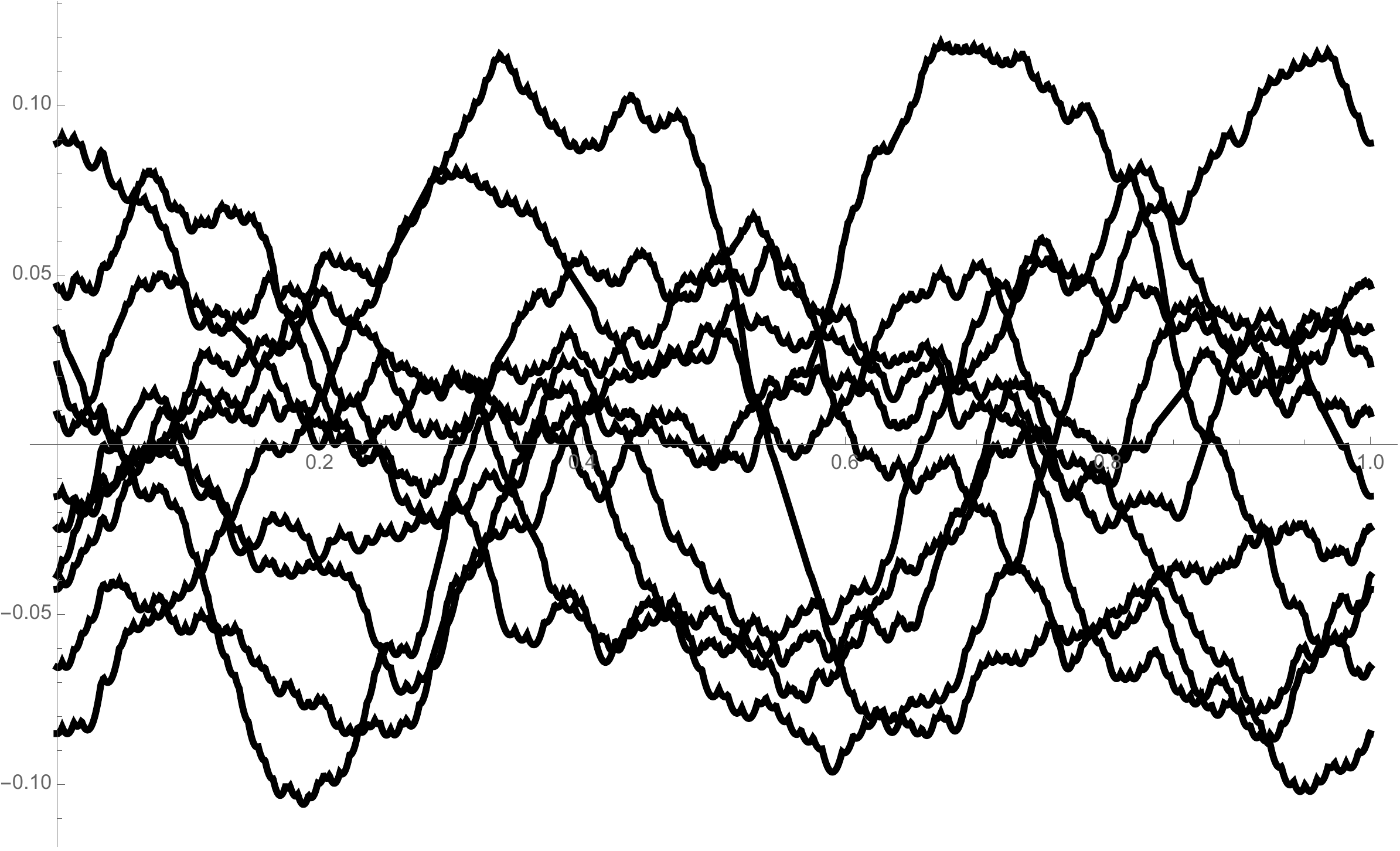}};
\end{tikzpicture}
\caption{The functions $f_{100}, f_{110}, f_{120} \dots, f_{200}$. We observe that they are quite different from one another and have an interesting behavior. Most importantly, they all seem to be quite small with $\|f_n\|_{L^{\infty}}$ barely exceeding $\|f\|_{L^{\infty}}$.}
\end{figure}

The inequalities posed in Open Problems 1 and 2 above, if true, would indicate that the sequence $(x_n)_{n=1}^{\infty}$ satisfies very good distribution properties. What is remarkable is that, in a certain sense, these properties would be close to optimal and connect to a variety of problems related to several different fields. We will derive below that
$$ \sum_{k, \ell = 1}^{n}{ f(x_k - x_{\ell})} \lesssim n \qquad \mbox{and} \qquad \left\| \sum_{k=1}^{n}{f(x - x_k)} \right\|_{L^{\infty}} \lesssim n^{1/2}$$
but these estimates seem to be very far from sharp. Any improvement of these estimates would immediately yield improvements of our other results via the arguments outlined below.
In the converse direction, it is an interesting problem whether the following is true: if $f:\mathbb{T} \rightarrow \mathbb{R}$ is an even, continuous function with mean 0 such that $\widehat{f}(k) > c|k|^{-2}$ for all $k \neq 0$, is it true that for any sequence $(x_k)_{k=1}^{\infty}$
$$ \left\| \sum_{k=1}^{n}{f(x - x_k)} \right\|_{L^{\infty}} \qquad \mbox{is unbounded in}~n?$$
For the `double' sum, this is a known result of Proinov \cite{proinov} who showed that, for some constant $c_f > 0$ depending only on the function and any sequence $(x_n)_{n=1}^{\infty}$,
$$ \sum_{k, \ell = 1}^{n}{ f(x_k - x_{\ell})}  \geq c_f \log{n} \qquad \mbox{for infinitely many}~n.$$
In particular, this shows that the bounds conjectured in Open Problem 1 would be optimal. 
It seems reasonable to assume that the condition $\widehat{f}(k) \geq c |k|^{-2}$, or some condition like it, is necessary for this phenomenon to occur; it is certainly necessary for our proof that the sequence $(x_n)_{n=1}^{\infty}$ is uniformly distributed. It may be of interest to study the dynamical system when $f$ is a trigonometric polynomial: it seems that in this case the sequence $(x_n)_{n=1}^{\infty}$ will not even be uniformly distributed.

\subsection{Connections To Other Problems.}

We start with a simple example. Let us define, as above, $f(x) = x^2 - x + 1/6$ and consider the sequence obtained via
$$ x_n = \arg\min_{x \in \mathbb{T}} \sum_{k=1}^{n-1}{f(x-x_k)}$$
when starting with $\left\{1/3, 4/5\right\}$. The sequence is easy to compute and starts 
$$ \frac{1}{3}, \frac{4}{5}, 0.066, 0.566, 0.941, 0.441, 0.191, 0.691, \dots$$
Empirically, this sequence (and seemingly any sequence obtained in this way) seems to have remarkable regularity properties. These properties can be stated in a variety of different ways (all of which are mutually connected), we refer to \cite{bc, dick, drmota, kuipers}.
\begin{itemize}
\item \textbf{Combinatorial.} For every $n \in \mathbb{N}$, the set $\left\{x_1, \dots, x_n\right\}$ has the property that for every interval $J \subset [0,1]$, the number of elements in $J$ is $|J| \cdot n$ with a very small error. It is known \cite{sch} that the error has to be at least of size $\gtrsim \log{n}$ for infinitely many values of $n$ and there are constructions where the error is indeed uniformly bounded by $\sim \log{n}$ in $n$.\\
\item \textbf{Analytical} (Erd\H{o}s-Tur\'an \cite{erd1, erd2})\textbf{.} The sequence has the property that $\left\{x_1, \dots, x_n\right\}$ satisfy favorable exponential sum estimates on expressions of the form
$$ \sum_{k=1}^{n}{ \frac{1}{k} \left| \sum_{\ell=1}^{n}{ e^{2\pi i k x_{\ell}}} \right|} \qquad \mbox{and} \qquad  \sum_{k=1}^{n}{ \frac{1}{k^2} \left| \sum_{\ell=1}^{n}{ e^{2\pi i k x_{\ell}}} \right|^2}.$$
The exponential sum $\sum_{\ell=1}^{n}{\exp(2 \pi i k x_{\ell})}$ is `small' for `small' values of $k$.\\
\item \textbf{Numerical} (Koksma-Hlawka \cite{hlawka})\textbf{.} The set $\left\{x_1, \dots, x_n\right\}$ is a good set for numerical integration: we have
$$ \int_{0}^{1}{f(x)dx} \sim  \frac{1}{n}\sum_{k=1}^{n}{f(x_k)}$$
with a `small' error for `smooth' functions $f$.\\
\item \textbf{Geometric} (Roth \cite{roth})\textbf{.} The two-dimensional set 
$$ \left\{ \left( \frac{i}{n}, x_i\right): 1 \leq i \leq n \right\} \subset [0,1]^2$$
is regularly distributed in the unit square: every cartesian box $[a,b] \times [c,d]$ contains roughly $(b-a)(d-c)n$ elements with a small error (see Fig. 3).
\end{itemize}

These problems have been intensively studied for over a century starting with the seminal paper of Weyl \cite{weyl}. We refer to the foundational results \cite{aard, beck, bil3, bil4, erd1,erd2, hlawka, roth, sch}, the survey paper \cite{bil} and the textbooks \cite{bc, chazelle, dick, drmota, kuipers} (also with regard to various different ways of interpreting the notion of `small' and `smooth' in the above statements and to which extent they are connected to one another).\\

\begin{figure}[h!]
\begin{tikzpicture}[scale=1.2]
\node at(0,0) {\includegraphics[width = 6.6cm]{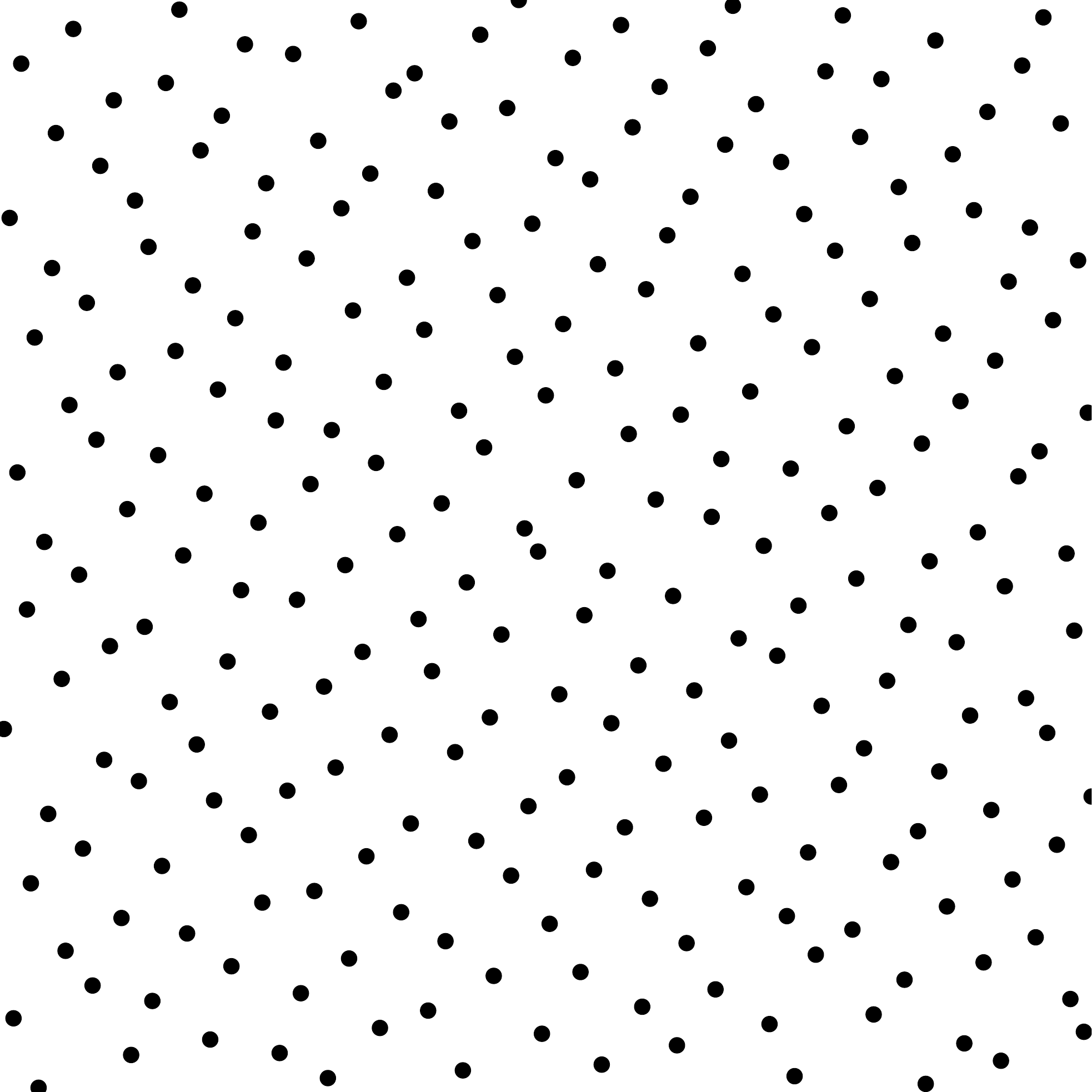}};
\draw [thick] (-2.78,-2.78) -- (2.78,-2.78) -- (2.78,2.78) -- (-2.78,2.78) -- (-2.78,-2.78);
\end{tikzpicture}
\caption{250 points created starting with $\left\{1/3, 4/5\right\}$ and using the second Bernoulli polynomial for $f$. We display the points $(n/250, x_n) \in [0,1]^2$ for $1\leq n \leq 250$. Why is this distribution so regular?}
\end{figure}

We introduce two very structurally different examples of sequences that are both known to be optimally behaved with respect to all of the above properties. These two competing examples are the
\begin{enumerate}
\item \textit{Kronecker sequence} given by $x_n = \left\{n \sqrt{2}\right\}$ (where $\left\{ \cdot \right\}$ denotes the fractional part). Here, $\sqrt{2}$ could be replaced by any other number with bounded continued fraction expansion.
\item \textit{van der Corput sequence} given by taking $x_n$ to be the rational number whose binary expansion is the reversed string of bits of $n$ when written in binary. It is known that base 2 can be replaced by any other prime basis.
\end{enumerate}

 We can now state our question as follows. 
\begin{quote}
\textbf{Open Problem 3.} Are these greedy sequences, up to constants, comparable to the behavior of the best Kronecker sequence or the van der Corput sequence in all the ways outlined above?
\end{quote}
If this were indeed the case, it could have very interesting consequences. Both the Kronecker sequence and the van der Corput sequence are known to be optimal in the one-dimensional setting (a result of Schmidt \cite{sch}, see also \cite{larcher0, larcher} for an improved constant). However, nobody knows what sequences are optimal in even $d=2$ dimensions (we refer to the excellent survey of Bilyk \cite{bil}). So, if there was a greedy-type construction with optimal behavior in $d=1$ dimension, it might suggest sequences of similar quality in higher dimensions as well -- this would be interesting because the greedy sequence seems to be unlike any that has been studied; in particular, if it enjoys good distribution properties, this seems like it would have to be because of a different underlying mechanism.

\subsection{Known results.} This type of construction was first proposed by the second author in \cite{stein}. There it was shown that if the function is 
$$ f(x) = -\log{\left(2 \sin{(\pi |x|)} \right)},$$
then the arising sequence satisfies, for all intervals $J \subset [0,1]$,
$$ \left| \frac{\#\left\{1 \leq i \leq n: x_i \in J \right\} }{n} - |J| \right| \leq  c\frac{\log{n}}{\sqrt{n}}.$$
 The arguments are based on the explicit structure of the Fourier series of $f(x)$ and do not generalize to other functions. It is already discussed in \cite{stein} that much stronger results seem to be true and that the sequence arising from this function $f$ seems, numerically, as well behaved with regard to all these aspects as the Kronecker or the van der Corput sequence. The same idea, interpreted differently, has also led to a numerical scheme that seems to be effective at regularizing point sets \cite{stein2}. 
 It was also noted in \cite{stein} that if one starts with a single element $\left\{x_1\right\}$, then the arising sequence seems to be related to the van der Corput sequence -- this is indeed the case and was subsequently proven by Pausinger \cite{pausinger}. Pausinger's theorem holds for the much larger family of strictly convex functions $f:[0,1]\rightarrow \mathbb{R}$ that are symmetric around $x=1/2$. There it is also shown that for general functions of this type, the arising construction results in a sequence satisfying for all $J \subset [0,1]$,
 $$ \left| \frac{\#\left\{1 \leq i \leq n: x_i \in J \right\} }{n} - |J| \right| \leq  \frac{c}{n^{1/3}}.$$
The bound stated in Open Problem 1 would improve this estimate to $\lesssim \log{n}/n^{2/3}$ (which, however, is still not at the $\log{n}/n$ level that we observe numerically).
Other types of greedy constructions of sequences have been considered in the literature, we refer to work of Kakutani \cite{kaku} and Temylakov: \cite{temr1, temr2} and \S 6.11 in \cite{tem1}. [Note added in print: Temlyakov has since used this type of sequence to establish an endpoint result for a result in Numerical Integration \cite{temnew}.]

\section{Results}
\subsection{Wasserstein distance.} The main purpose of this paper is to (1) describe the phenomenon and its connections in a concise way and (2) to point out that we can obtain slightly improved regularity results by switching to the Wasserstein distance. The Wasserstein distance \cite{wasser, villani} is a notion of distance between two measures (roughly `how much mass has to be transported how far from an initial measure to achieve a target measure'). We will only discuss the case where one measure is the empirical distribution 
$$ \mu = \frac{1}{n} \sum_{k=1}^{n}{ \delta_{x_k}} \qquad \mbox{and the other measure is} \qquad \nu = dx.$$ 
The $p-$Wasserstein distance between two measures
$\mu$ and $\nu$ is defined as
$$ W_p(\mu, \nu) = \left( \inf_{\gamma \in \Gamma(\mu, \nu)} \int_{M \times M}{ |x-y|^p d \gamma(x,y)}\right)^{1/p},$$
where $| \cdot |$ is the metric and $\Gamma(\mu, \nu)$ denotes the collection of all measures on $M \times M$
with marginals $\mu$ and $\nu$, respectively (also called the set of all couplings of $\mu$ and $\nu$). 
In our setting, we trivially have $0 \leq W_p(\mu, \nu) \leq 1$. 
The known inequality \cite{pausinger}
 $$ \left| \frac{\#\left\{1 \leq i \leq n: x_i \in J \right\} }{n} - |J| \right| \leq  \frac{c}{n^{1/3}}$$
can be coupled with the Monge-Kantorovich formula (see e.g. \cite{villani}) to obtain
$$ W_1\left( \frac{1}{n} \sum_{k=1}^{n} \delta_{x_k} , dx \right) \lesssim \frac{1}{n^{1/3}}.$$

\subsection{Main Results.}
Our main result is an improvement for the $W_2-$distance. H\"older's inequality shows that
$W_1(\mu, \nu) \leq W_2(\mu, \nu)$, so the result also implies improved bounds for the $W_1$ distance.

\begin{theorem}  Let the even function $f:\mathbb{T} \rightarrow \mathbb{R}$ satisfy $\widehat{f}(k) \geq c |k|^{-2}$ for some fixed constant $c>0$ and all $k \neq 0$. Define a sequence via
$$ x_n = \arg\min_x \sum_{k=1}^{n-1}{f(x-x_k)},$$
then this sequence satisfies
$$ W_2\left( \frac{1}{n} \sum_{k=1}^{n} \delta_{x_k} , dx \right) \lesssim \frac{1}{n^{1/2}},$$
where the implicit constant depends only on the initial set, $f(0)$ and $c$.
\end{theorem}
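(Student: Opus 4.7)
My plan reduces the estimate to an exponential-sum bound and then invokes the relationship between the Sobolev $\dot H^{-1}$ norm and the Wasserstein $W_2$ distance. The argument has three steps.

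In Step 1 I would control the ``energy'' $E_n := \sum_{k,\ell=1}^{n} f(x_k - x_\ell)$. Setting $g_{n-1}(x) = \sum_{k=1}^{n-1} f(x - x_k)$, the mean-zero hypothesis forces $\int_{\mathbb{T}} g_{n-1} = 0$, and the greedy choice $x_n = \arg\min g_{n-1}$ then yields $g_{n-1}(x_n) \le 0$. Expanding via evenness of $f$,
\[
E_n = E_{n-1} + 2\, g_{n-1}(x_n) + f(0) \le E_{n-1} + f(0),
\]
so induction gives $E_n \le n f(0) + O(1)$, with the $O(1)$ absorbing the contribution of the seed points.

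In Step 2 I would trade this energy bound for an exponential-sum bound. Fourier-expanding $f$ and using that $\widehat{f}(0) = 0$ gives
\[
E_n = \sum_{m \neq 0} \widehat{f}(m) \Bigl| \sum_{k=1}^{n} e^{2\pi i m x_k} \Bigr|^2,
\]
so the hypothesis $\widehat{f}(m) \ge c|m|^{-2}$ together with Step 1 yields
\[
\sum_{m \neq 0} \frac{1}{m^2} \Bigl| \sum_{k=1}^{n} e^{2\pi i m x_k} \Bigr|^2 \lesssim n.
\]
Dividing by $n^2$ and interpreting the left-hand side via Plancherel identifies it, up to an absolute constant, as $\|\mu - dx\|_{\dot H^{-1}(\mathbb{T})}^2$; hence $\|\mu - dx\|_{\dot H^{-1}} \lesssim n^{-1/2}$.

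In Step 3 I would invoke Peyre's inequality on the torus, which states $W_2(\mu, \nu) \le 2\|\mu - \nu\|_{\dot H^{-1}(\nu)}$ whenever $\nu$ has density bounded below; since $\nu = dx$ has density $1$, the hypothesis is trivially met, and combining with Step 2 delivers $W_2(\mu, dx) \lesssim n^{-1/2}$. The main obstacle I anticipate lies precisely here: Peyre's inequality is cleanest when both measures are absolutely continuous, while our $\mu$ is purely atomic. My planned remedy is to mollify $\mu$ at scale $\varepsilon = 1/n$, apply the inequality to the smoothed measure (whose $\dot H^{-1}$-distance to $dx$ is within a constant factor of that of $\mu$), and absorb the $O(\varepsilon)$ mollification error in $W_2$ via the triangle inequality.
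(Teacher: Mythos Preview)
Your proposal is correct and follows essentially the same three-step argument as the paper: the greedy energy bound $E_n \le n f(0)$, its Fourier reformulation to control $\sum_{m\neq 0} m^{-2}\,|\widehat{\mu}(m)|^2$, and then Peyre's $W_2 \lesssim \dot H^{-1}$ inequality. The only difference is that your anticipated obstacle in Step~3 does not actually arise: the paper applies Peyre's estimate directly to the atomic measure $\mu$, noting that in one dimension the inequality $W_2(\mu,dx)\lesssim\|\mu\|_{\dot H^{-1}}$ is in fact an identity (see \cite{peyre} and \cite[Exercise~64]{santa}), so no mollification is needed.
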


One way of interpreting the Theorem is as follows: given $\left\{x_1, \dots, x_n\right\}$ we can interpret these points as Dirac measures with weight $1/n$. It is then possible to `break' these points up and move their $L^1$-mass a distance of, on average, not more than $\sim n^{-1/2}$ to recreate the uniform distribution. The result seems to be far from the truth, which we believe to be at scale $n^{-1}$ up to logarithmic factors (see below).
We also obtain the following corollary (which was suggested to us together with its proof by Igor Shparlinski).
\begin{corollary} Suppose $f:\mathbb{T} \rightarrow \mathbb{R}$ is even, has mean value 0 and satisfies both
$$\widehat{f}(k) > 0 \qquad \mbox{and} \qquad \sum_{k \in \mathbb{Z}} \widehat{f}(k) < \infty.$$

Then, for any sequence $(x_n)$ arising from the algorithm outlined above,
$$\left\| \sum_{k=1}^{n}{f(x - x_k)} \right\|_{L^{\infty}} \lesssim \sqrt{n}.$$
\end{corollary}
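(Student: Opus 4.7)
The plan is to establish the bound in two steps: first, derive the double-sum estimate $\sum_{j,k=1}^{n} f(x_j - x_k) \lesssim n$ that the authors announce earlier in the excerpt, and then upgrade this to an $L^{\infty}$ estimate via a Cauchy--Schwarz-in-Fourier-space argument.

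\textbf{Step 1.} I would set $f_n(x) = \sum_{k=1}^{n} f(x - x_k)$ and $D_n = \sum_{j,k=1}^{n} f(x_j - x_k) = \sum_{j=1}^{n} f_n(x_j)$. Since $f$ has mean zero, $\int_{\mathbb{T}} f_n = 0$, so $\min_{x} f_n(x) \leq 0$. The greedy rule then gives $f_n(x_{n+1}) \leq 0$. Expanding $D_{n+1}$ and using that $f$ is even yields the telescoping recursion $D_{n+1} = D_n + 2 f_n(x_{n+1}) + f(0) \leq D_n + f(0)$, and iterating produces $D_n \leq n f(0)$.

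\textbf{Step 2.} Move to the Fourier side. With $S_k = \sum_{j=1}^{n} e^{-2 \pi i k x_j}$, positivity of $\widehat{f}$ lets me rewrite Step 1 as
$$ \sum_{k \neq 0} \widehat{f}(k) \, |S_k|^2 \;=\; D_n \;\lesssim\; n. $$
The Fourier representation $f_n(x) = \sum_{k \neq 0} \widehat{f}(k) S_k e^{2 \pi i k x}$ bounds $|f_n(x)|$ by $\sum_{k \neq 0} \widehat{f}(k) |S_k|$, and the decisive trick is to split the weight as $\widehat{f}(k) = \widehat{f}(k)^{1/2} \cdot \widehat{f}(k)^{1/2}$ and apply Cauchy--Schwarz:
$$ |f_n(x)| \;\leq\; \Bigl( \sum_{k \neq 0} \widehat{f}(k) \Bigr)^{1/2} \Bigl( \sum_{k \neq 0} \widehat{f}(k) |S_k|^2 \Bigr)^{1/2} \;\lesssim\; \sqrt{n}. $$

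\textbf{Main obstacle.} There is essentially no obstacle here; once the greedy step in Step 1 is in hand, the rest is a short Fourier computation. What is worth noting is that the two hypotheses on $f$ play complementary roles -- the positivity $\widehat{f}(k) > 0$ is what makes Step 1 into a meaningful bound on a \emph{weighted} exponential sum, while the absolute summability $\sum_{k} \widehat{f}(k) < \infty$ is exactly what is needed to close the Cauchy--Schwarz in Step 2. Neither the quantitative $|k|^{-2}$ lower bound on $\widehat{f}$ nor any property of the greedy algorithm beyond ``$f_n(x_{n+1}) \leq 0$'' is used, which explains why the corollary is stated under weaker hypotheses than the main theorem.
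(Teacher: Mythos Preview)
Your proposal is correct and follows essentially the same argument as the paper: both first establish the energy bound $\sum_{j,k=1}^n f(x_j-x_k)\le n f(0)$ via the greedy property (you phrase it as a telescoping recursion, the paper writes it out as a direct sum over $\ell$), then pass to Fourier, split $\widehat f(k)=\widehat f(k)^{1/2}\cdot\widehat f(k)^{1/2}$, and apply Cauchy--Schwarz to conclude $\|f_n\|_{L^\infty}\le f(0)\sqrt{n}$. Your closing remark about which hypothesis is used where is also accurate and matches the paper's implicit division of labor.
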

Again, we believe this to be far from optimal and expect the quantity to grow not much faster than (at most) logarithmically.
We have a slight refinement of this statement in the case $\widehat{f}(k) \sim |k|^{-2}$ 

\begin{theorem} Let $f:\mathbb{T} \rightarrow \mathbb{R}$ be an even function with mean 0 satisfying $c_1 |k|^{-2} \leq \widehat{f}(k) \leq c_2 |k|^{-2}$ for all $k \neq 0$ for some universal $c_1, c_2 > 0$. Then
$$\left\| \sum_{k=1}^{n}{f(x - x_k)} \right\|_{L^{\infty}} \lesssim n^{1/3} \qquad \mbox{for infinitely many}~n.$$
\end{theorem}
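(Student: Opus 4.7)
The plan is to combine a pigeonhole argument on the ``energy'' $E_n = \sum_{k,\ell=1}^{n} f(x_k-x_\ell)$ with a one-dimensional Gagliardo--Nirenberg interpolation on $\mathbb{T}$. The key insight is that although the $\sqrt{n}$ bound from Corollary~1 is essentially sharp when one interpolates via the $L^2$ norm of $f_n$, a much stronger $L^1$ bound is actually available on a density-$1$ subsequence of indices, and feeding this into a sharper interpolation yields the exponent $1/3$.

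First I would expand $E_{n+1}-E_n = 2f_n(x_{n+1}) + f(0)$, which uses only the symmetry of $f$. Since $x_{n+1}$ minimizes $f_n$ and $\int_{\mathbb{T}} f_n = 0$, one has $f_n(x_{n+1})=\min f_n\leq 0$. Telescoping this identity and using $E_n\geq 0$ (positive definiteness) yields
\[\sum_{n=1}^{N} |\min f_n| \;\leq\; \tfrac{N}{2} f(0) + O(1),\]
so at least half the indices $n\leq N$ satisfy $|\min f_n|\leq f(0)$. For any such ``good'' $n$, the mean-zero property of $f_n$ forces $\int(f_n)_+ = \int(f_n)_-\leq |\min f_n|$, and hence $\|f_n\|_{L^1} \leq 2|\min f_n|= O(1)$. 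This is the mechanism that upgrades the available $L^2$ bound to an $L^1$ bound on an infinite subsequence.

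Independently, I would use the upper bound $\widehat{f}(k)\leq c_2|k|^{-2}$ (the hypothesis specific to this theorem) together with Plancherel to get
\[\|f_n'\|_{L^2}^2 \;\lesssim\; \sum_{k\neq 0}\frac{|S_n(k)|^2}{k^2} \;\lesssim\; E_n \;\lesssim\; n,\]
where $S_n(k)=\sum_{j=1}^{n}e^{2\pi i k x_j}$ and the final step is the $E_n\lesssim n$ estimate already stated in the paper. Combining these ingredients via the one-dimensional Gagliardo--Nirenberg inequality for mean-zero $g$ on $\mathbb{T}$,
\[\|g\|_{L^\infty} \;\leq\; 2^{2/3}\,\|g\|_{L^1}^{1/3}\,\|g'\|_{L^2}^{2/3},\]
(which itself follows by combining $\|g\|_{L^\infty}^2\leq 2\|g\|_{L^2}\|g'\|_{L^2}$ with $\|g\|_{L^2}^2\leq \|g\|_{L^1}\|g\|_{L^\infty}$) produces, on the good subsequence, $\|f_n\|_{L^\infty}\lesssim n^{1/3}$.

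The argument is essentially an assembly of the two observations above, so I do not foresee a serious obstacle. The one delicate point to watch is that the Gagliardo--Nirenberg exponents are sharp: feeding the always-available bound $\|f_n\|_{L^2}\lesssim\sqrt{n}$ directly into $\|g\|_{L^\infty}^2\leq 2\|g\|_{L^2}\|g'\|_{L^2}$ only reproduces the $\sqrt{n}$ of Corollary~1, so the upgrade from $L^2$ to $L^1$ on an infinite subsequence is genuinely necessary in order to reach $n^{1/3}$.
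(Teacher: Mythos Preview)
Your proposal is correct and follows essentially the same approach as the paper: both combine the Gagliardo--Nirenberg inequality $\|g\|_{L^\infty}\lesssim \|g'\|_{L^2}^{2/3}\|g\|_{L^1}^{1/3}$ with the Fourier estimate $\|f_n'\|_{L^2}\lesssim\sqrt{n}$ and an $L^1$ bound on $f_n$ along a subsequence. The only cosmetic difference is in that last step: the paper argues by contradiction (if $\|f_n\|_{L^1}>2f(0)$ then $E_{n+1}\leq E_n-f(0)$, which cannot persist since $E_n\geq 0$), whereas you telescope the same identity and average, obtaining the slightly sharper conclusion that a density-$\tfrac12$ set of indices is good rather than merely infinitely many.
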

The argument is slightly finer than this: we will prove that 
$$\left\| \sum_{k=1}^{n}{f(x - x_k)} \right\|_{L^{\infty}} \lesssim n^{1/3} \left\| \sum_{k=1}^{n}{f(x - x_k)} \right\|^{1/3}_{L^{1}}$$
and then prove that the $L^1-$term has to be $\lesssim 1$ infinitely many times.
We note that this result is below the $n^{1/2}-$threshold that we would expect from randomly chosen points. Again, as mentioned above, we expect the error rate to actually be much smaller than this.\\

We will now discuss why Wasserstein distance is a very canonical way of capturing problems of this type. 
We state this formally in the following estimate.

\begin{corollary} Suppose $f:\mathbb{T} \rightarrow \mathbb{R}$ is even, has mean value 0 and satisfies $\widehat{f}(k)\geq c |k|^{-2}$ for $k \neq 0$. Then, for any set $\left\{x_1, \dots, x_n\right\} \subset \mathbb{T}$, we have
$$ W_2\left( \frac{1}{n} \sum_{k=1}^{n} \delta_{x_k} , dx \right) \lesssim \frac{1}{n} \left(\sum_{k,\ell=1}^{n}{f(x_k - x_{\ell})}\right)^{1/2}.$$
\end{corollary}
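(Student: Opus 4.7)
The plan is to go through Fourier analysis on $\mathbb{T}$, exploiting the two-sided interplay between the double sum $\sum_{k,\ell} f(x_k - x_\ell)$ and the negative Sobolev $\dot H^{-1}$ norm of the signed measure $\mu - \nu$. Writing $\mu = \tfrac1n\sum_{k=1}^n \delta_{x_k}$ and $\nu = dx$, one has $\widehat{\mu}(j) - \widehat{\nu}(j) = \tfrac{1}{n}\sum_{k=1}^n e^{-2\pi i j x_k}$ for $j\neq 0$, and $\widehat{\mu-\nu}(0)=0$. A Plancherel expansion of the double sum then gives
\begin{equation*}
\sum_{k,\ell=1}^n f(x_k - x_\ell) \;=\; \sum_{j \neq 0} \widehat{f}(j)\,\Bigl|\sum_{k=1}^n e^{2\pi i j x_k}\Bigr|^2 \;=\; n^2 \sum_{j \neq 0} \widehat{f}(j)\,|\widehat{\mu - \nu}(j)|^2,
\end{equation*}
where the $j=0$ term vanishes because $f$ has mean $0$. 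The key point is that every Fourier coefficient $\widehat{f}(j)$ is positive by assumption, so every term in the sum is nonnegative; this lets one use the lower bound $\widehat f(j)\ge c|j|^{-2}$ termwise.

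Next I would invoke the standard bound $\widehat f(j) \geq c|j|^{-2}$ to obtain
\begin{equation*}
\sum_{j\ne 0} \frac{|\widehat{\mu-\nu}(j)|^2}{|j|^2} \;\leq\; \frac{1}{c\,n^2}\sum_{k,\ell=1}^{n} f(x_k - x_\ell).
\end{equation*}
The left-hand side is (up to an absolute constant depending on $2\pi$ normalizations) the squared homogeneous $\dot H^{-1}(\mathbb{T})$ norm of $\mu - \nu$.

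Finally, to pass from $\dot H^{-1}$ to $W_2$, I would cite the Peyré-Loeper inequality on the torus: whenever one of the two measures is Lebesgue (or, more generally, has density bounded away from $0$),
\begin{equation*}
W_2(\mu,\nu) \;\lesssim\; \|\mu-\nu\|_{\dot H^{-1}(\mathbb{T})}.
\end{equation*}
Combining this with the previous display and taking square roots yields exactly the claimed inequality
\begin{equation*}
W_2\!\left(\frac{1}{n}\sum_{k=1}^n \delta_{x_k},\,dx\right) \;\lesssim\; \frac{1}{n}\Bigl(\sum_{k,\ell=1}^n f(x_k - x_\ell)\Bigr)^{1/2}.
\end{equation*}

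The Fourier computation and the pointwise use of the spectral lower bound on $\widehat f$ are routine; the substantive ingredient is the Peyré-Loeper bound $W_2 \lesssim \|\cdot\|_{\dot H^{-1}}$. Since the reference measure here is Lebesgue (flat density), this inequality is classical and in the 1D torus setting can even be proved by hand via optimal transport maps on $\mathbb{T}$, so the cleanest exposition would be to isolate it as a lemma and cite \cite{villani}. That step, and keeping track of the $2\pi$ factors, is the only real obstacle; everything else is Plancherel.
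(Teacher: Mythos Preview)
Your argument is correct and essentially identical to the paper's own proof: expand the double sum via Plancherel, use the termwise lower bound $\widehat f(k)\ge c|k|^{-2}$ to dominate the $\dot H^{-1}$ norm of $\mu-dx$, and then apply Peyr\'e's inequality $W_2(\mu,dx)\lesssim\|\mu-dx\|_{\dot H^{-1}}$. The only cosmetic difference is that the paper cites Peyr\'e \cite{peyre} (with the one-dimensional identity also noted in \cite{santa}) rather than Villani.
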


Fix now a function such that $\widehat{f}(k) \sim |k|^{-2}$ for $k\neq 0$ (in the sense of having corresponding upper and lower bounds). Open Problem 1 asks whether
$$ \sum_{k, \ell = 1}^{n}{ f(x_k - x_{\ell})} \lesssim \log{n} \qquad \mbox{might hold}$$
and, conversely, which kind of lower bounds exist. Corollary 2 shows that any such estimate would imply
$$ W_2\left( \frac{1}{n} \sum_{k=1}^{n} \delta_{x_k} , dx \right) \lesssim \frac{\sqrt{\log n}}{n}.$$
This connects to yet another problem, that of \textit{irregularities of distribution}. A seminal result of Schmidt \cite{sch} implies that for \textit{any} sequence $(x_n)_{n=1}^{\infty}$ there are infinitely many integers $n$ and intervals $J_n$ such that
$$ \left| \frac{\# \left\{x_1, \dots, x_n:x_i \in J_n\right\}}{n} - |J_n|\right| \geq \frac{1}{100} \frac{\log{n}}{n}.$$
This result shows that, in a sense, irregularities of distribution are unavoidable. 
A natural question now is the following: does a similar phenomenon exist for the Wasserstein distance?
This was answered by Cole Graham \cite{graham} who proved the following result.

\begin{thm}[Graham \cite{graham}] For any sequence $(x_n)_{n=1}^{\infty}$ in $[0,1]$, we have
$$ W_2\left( \frac{1}{n} \sum_{k=1}^{n} \delta_{x_k} , dx \right) \gtrsim \frac{ \sqrt{ \log{n}}}{n}\quad \mbox{for infinitely many}~n.$$
\end{thm}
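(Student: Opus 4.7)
The plan is to reduce the claim to Proinov's theorem (already stated in the excerpt). The bridge is an essentially lossless identification of $W_2(\mu_n, dx)$ with the negative Sobolev norm $\|\mu_n - dx\|_{\dot H^{-1}(\mathbb{T})}$, a luxury that is available specifically in one dimension.

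The first ingredient is the classical one-dimensional identity
\[
W_2(\mu, dx)^2 \;=\; \int_0^1 (F_\mu(x) - x)^2 \, dx,
\]
where $F_\mu$ denotes the CDF of $\mu$; this follows from the quantile formula $W_2^2 = \int_0^1 (F_\mu^{-1}(t) - t)^2 \, dt$ by reflecting the graph of $F_\mu$ across the diagonal $y = x$, and for empirical $\mu$ it is most cleanly verified by direct calculation on the order statistics. Now the function $g(x) := F_{\mu_n}(x) - x$ satisfies $g(0) = g(1) = 0$, so it extends continuously and periodically to $\mathbb{T}$, with distributional derivative $\mu_n - dx$. Parseval (using $\hat g(k) = \hat\mu_n(k)/(2\pi i k)$ for $k \neq 0$) then yields
\[
W_2(\mu_n, dx)^2 \;\geq\; \sum_{k \neq 0} \frac{|\hat\mu_n(k)|^2}{(2\pi k)^2},
\]
which is (up to a normalization constant) the squared $\dot H^{-1}$ norm of $\mu_n - dx$.

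To invoke Proinov I pick any even $f: \mathbb{T} \to \mathbb{R}$ with $\hat f(k) = (2\pi k)^{-2}$ for $k \neq 0$; an affine rescaling of the second Bernoulli polynomial works. Expanding the double sum in Fourier gives
\[
\sum_{k, \ell = 1}^n f(x_k - x_\ell) \;=\; n^2 \sum_{m \neq 0} \hat f(m) \, |\hat\mu_n(m)|^2 \;=\; n^2 \sum_{m \neq 0} \frac{|\hat\mu_n(m)|^2}{(2\pi m)^2},
\]
so the previous display reads $W_2(\mu_n, dx)^2 \geq n^{-2} \sum_{k, \ell = 1}^n f(x_k - x_\ell)$. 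Since $\hat f(k) \geq c |k|^{-2}$, Proinov's theorem furnishes $\sum_{k, \ell = 1}^n f(x_k - x_\ell) \geq c_f \log n$ for infinitely many $n$; taking square roots yields the desired bound. The only non-routine step is the identity $W_2^2 = \int_0^1 (F_\mu - x)^2 \, dx$: it is standard in one-dimensional optimal transport, but when $\mu$ has atoms the cleanest route is direct verification rather than a change of variables. Everything else is transparent Fourier bookkeeping combined with the quoted Proinov input.
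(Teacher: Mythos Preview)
The paper does not actually prove this theorem: it is quoted as an external result of Graham \cite{graham}. What the paper does do is run the implication in the \emph{opposite} direction---it combines Graham's theorem with Corollary~2 (which rests on Peyre's upper bound $W_2\lesssim\|\mu-dx\|_{\dot H^{-1}}$) to rederive Proinov's lower bound on the energy. Your proposal reverses this arrow: you take Proinov's theorem as input (it has an independent 1986 proof, so no circularity) and combine it with the \emph{lower} bound $W_2(\mu_n,dx)^2\ge \sum_{k\ne0}|\hat\mu_n(k)|^2/(2\pi k)^2$ to deduce Graham's statement. That is a legitimate and clean alternative derivation.

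The argument is essentially correct. One comment on the bridge step: the identity $W_2(\mu,dx)^2=\int_0^1(F_\mu(x)-x)^2\,dx$ that you invoke is the \emph{interval} formula. If $W_2$ is meant on $\mathbb{T}$ (as in the rest of the paper), the corresponding statement is $W_2(\mu,dx)^2=\min_{\alpha}\int_0^1(F_\mu(x)-x-\alpha)^2\,dx$, and the minimizing $\alpha$ kills exactly the zeroth Fourier coefficient of $g=F_\mu-\mathrm{id}$. In either reading you end up with $W_2(\mu_n,dx)^2\ge\sum_{k\ne0}|\hat\mu_n(k)|^2/(2\pi k)^2$---in fact on the torus this is an equality, which is precisely what the paper alludes to when it remarks that ``the one-dimensional case of Peyre's inequality is an identity.'' Your reflection argument for $\int(F-x)^2=\int(F^{-1}-t)^2$ is correct (it follows from two integrations by parts), so the interval version is fine as written; just be aware that the torus version needs the extra shift if you want to cover that case too.
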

 
The second author has already remarked in \cite{stein0} that this is sharp for the Kronecker sequence $x_n = \left\{n \alpha\right\}$ for any badly approximable $\alpha$ (say, $\alpha = \sqrt{2}$). An implication of Graham's result coupled with our Corollary above is the following result that was first established by Proinov.

\begin{thm}[Proinov, \cite{proinov}] Let $f:\mathbb{T} \rightarrow \mathbb{R}$ be a function with mean value 0 satisfying $\widehat{f}(k) \geq c |k|^{-2}$ for $k \neq 0$. Then, for any sequence $(x_n)_{n=1}^{\infty}$,
we have
 $$ \sum_{k,\ell=1}^{n}{f(x_k - x_{\ell})} \gtrsim \log{n} \qquad \mbox{for infinitely many}~n.$$
\end{thm}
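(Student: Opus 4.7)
The plan is to obtain Proinov's lower bound as an immediate consequence of Corollary 2 and Graham's theorem, both stated just above. Corollary 2 provides an upper bound on $W_2$ in terms of the double sum $\sum f(x_k - x_\ell)$, while Graham's theorem provides a matching lower bound for $W_2$ along an infinite subsequence of indices; chaining the two inequalities will give the conclusion.

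First I would apply Corollary 2 to an arbitrary sequence $(x_n)_{n=1}^\infty$, viewing each initial segment $\{x_1,\dots,x_n\}$ as a finite point set. The hypotheses placed on $f$ in Proinov's theorem (mean zero and $\widehat{f}(k)\ge c|k|^{-2}$) match those of Corollary 2; evenness of $f$ is forced implicitly by the fact that the pointwise inequality on $\widehat{f}$ is real-valued. Corollary 2 then yields
$$
W_2\left(\frac{1}{n}\sum_{k=1}^n \delta_{x_k},\, dx\right) \lesssim \frac{1}{n}\left(\sum_{k,\ell=1}^n f(x_k-x_\ell)\right)^{1/2},
$$
with non-negativity of the double sum (needed to extract the square root) being automatic from Parseval, since $\sum_{k,\ell=1}^n f(x_k-x_\ell)=\sum_{j\neq 0}\widehat{f}(j)\left|\sum_k e^{2\pi i j x_k}\right|^2\ge 0$.

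Next I would invoke Graham's theorem, which asserts $W_2\left(\frac{1}{n}\sum_k \delta_{x_k},dx\right)\gtrsim \sqrt{\log n}/n$ for infinitely many $n$. Chaining the two bounds on any such infinite set of indices and squaring to cancel the common $1/n$ factors leaves
$$
\sum_{k,\ell=1}^n f(x_k-x_\ell) \gtrsim \log n
$$
for the same infinite set of $n$, which is exactly the claim.

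The main obstacle here has already been dealt with inside the two ingredients themselves: Corollary 2 hides a Fourier-dual estimate relating $W_2$ to weighted exponential sums, and Graham's theorem plays the role of a Wasserstein analogue of Schmidt's classical logarithmic irregularity-of-distribution lower bound. What remains is only bookkeeping of constants, so that the constant $c_f$ in Proinov's conclusion depends only on the constant $c$ in the hypothesis on $\widehat{f}$, on the implicit constant of Corollary 2, and on the absolute constant in Graham's theorem.
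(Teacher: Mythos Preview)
Your proposal is correct and matches the paper's own derivation exactly: the paper states just before Proinov's theorem that it is ``an implication of Graham's result coupled with our Corollary above,'' which is precisely the chaining of Corollary~2 with Graham's lower bound that you carry out. Your remark that evenness of $f$ is forced by the hypothesis (real $f$ with real Fourier coefficients) is a correct observation that fills the only apparent gap between the hypotheses of Proinov's theorem and those of Corollary~2.
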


Using again the Kronecker sequence, we can show that there are sequences for which this notion of energy does indeed grow very slowly; this result is folklore, we include it for the convenience of the reader. The same result is also known for the van der Corput sequence, we refer to Proinov \& Grozdanov \cite{proinov2}.
 
 \begin{proposition} Let $f:\mathbb{T} \rightarrow \mathbb{R}$ have mean value 0 satisfying $\widehat{f}(k) \leq c |k|^{-2}$ for $k \neq 0$. Then, for any badly approximable $\alpha$, the sequence $x_n = \left\{n \alpha \right\}$ satisfies
 $$ \sum_{k,\ell=1}^{n}{f(x_k - x_{\ell})} \lesssim \log{n}.$$
 \end{proposition}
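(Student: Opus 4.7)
The plan is Fourier duality combined with a Diophantine estimate for exponential sums of $\alpha$. The first step is the Parseval expansion
$$\sum_{k,\ell=1}^{n}f(x_k-x_\ell) \;=\; \sum_{m\ne 0}\widehat{f}(m)\,|S_n(m\alpha)|^2,\qquad S_n(\theta):=\sum_{k=1}^{n}e^{2\pi i k\theta},$$
together with the Dirichlet kernel estimate $|S_n(\theta)|^2 \leq \min\bigl(n^2,(2\|\theta\|)^{-2}\bigr)$ and the hypothesis $\widehat{f}(m)\le c|m|^{-2}$. Since $|S_n(m\alpha)|^2\ge 0$, terms with $\widehat{f}(m)<0$ only help, so it suffices to establish the purely Diophantine inequality
$$\mathcal{S}(n) \;:=\; \sum_{m=1}^{\infty}\frac{1}{m^{2}}\min\!\left(n^{2},\,\frac{1}{\|m\alpha\|^{2}}\right) \;\lesssim\; \log n.$$

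The key analytic input is the three-distance theorem: because $\alpha$ has bounded partial quotients, the points $\{m\alpha\}_{m\in[M,2M]}$ are quasi-regularly spaced on $\mathbb{T}$ at scale $1/M$, which yields the uniform dyadic-window count
$$\#\bigl\{m\in[M,2M]\colon\|m\alpha\|\le t\bigr\}\;\lesssim\;Mt+1\qquad(t\ge 0).$$
A layer-cake integration of this count gives $\sum_{m\in[M,2M]}\|m\alpha\|^{-2}\lesssim M^{2}$, with a constant independent of $M$, and hence $\sum_{m\in[M,2M]}m^{-2}\|m\alpha\|^{-2}\lesssim 1$. For the head $m\le n$ of $\mathcal{S}(n)$, I would bound $\min$ by $\|m\alpha\|^{-2}$ and sum these $O(1)$ contributions over $O(\log n)$ dyadic windows to obtain $O(\log n)$. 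For the tail $m>n$, splitting according to whether $\|m\alpha\|\gtrless 1/n$ and applying the same count gives $\sum_{m\in[M,2M]}\min(n^{2},\|m\alpha\|^{-2})\lesssim Mn+n^{2}$, which after dividing by $M^{2}$ produces a geometrically convergent series over dyadic $M>n$ of total size $O(1)$. Adding head and tail proves the displayed inequality and thus the proposition.

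The delicate step is the window bound $\sum_{m\in[M,2M]}\|m\alpha\|^{-2}\lesssim M^{2}$ with a constant \emph{uniform} in $M$. The crude inequality $\|m\alpha\|\ge c/m$ shows only that each term is $O(1)$ and yields an $O(M)$ bound per window, while the general Erd\H{o}s--Tur\'an discrepancy bound $\#\{m\le N\colon\|m\alpha\|\le t\}\le 2Nt+O(\log N)$ inserts a spurious logarithm into every window and degrades the final estimate to $O((\log n)^{2})$. The three-distance refinement --- available precisely because $\alpha$ has bounded partial quotients --- eliminates this logarithmic loss, which is exactly what is required to match Proinov's lower bound $\gtrsim\log n$ up to a constant.
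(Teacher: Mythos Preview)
Your proof is correct and follows essentially the same route as the paper: Fourier expansion, reduction to $\sum_{k}k^{-2}\lvert S_n(k\alpha)\rvert^2$, dyadic decomposition in $k$, and the key window estimate $\sum_{m\in[M,2M]}\|m\alpha\|^{-2}\lesssim M^{2}$ coming from the spacing of $\{m\alpha\}$ for badly approximable $\alpha$. The only cosmetic differences are that the paper truncates the tail at $k=n^{2}$ (using the trivial bound $\lvert S_n\rvert\le n$) rather than at $k=n$, and derives the spacing directly from the Diophantine inequality $\|(k_1-k_2)\alpha\|\ge c_\alpha/\lvert k_1-k_2\rvert$ instead of passing through the three-distance theorem.
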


The proof of the proposition makes explicit use of a rather delicate property of the sequence $\left\{n \alpha\right\}$. It is thus even more striking that, possibly, the greedy sequence
$$ x_n = \arg\min_x \sum_{k=1}^{n-1}{f(x-x_k)}$$
might conceivably behave in a similar manner. Naturally, this falls into the realm of Approximation Theory and, more specifically, the Greedy Algorithm \cite{dev, tem0, tem1} and its use in Approximation Theory.
Indeed, we can interpret this greedy sequence as a way to approximate the constant function 0 by means of translates $f(x- x_k)$. The Greedy Algorithm is well understood to yield reasonable
estimates for a broad class of functions -- what is of special interest here is that in our case the greedy algorithm seems to perform much better than one would usually expect from a greedy algorithm; moreover, it seems to be comparable in efficiency to subtle constructions in Number Theory that make use of delicate notions such as badly approximable numbers.

\subsection{Two Remarks.} All our estimates are based on the inequality
$$ \sum_{k, \ell = 1}^{n}{f(x_k - x_{\ell})} \leq n f(0).$$
It is not difficult to see (see below) that this is indeed satisfied for our greedy construction. However, the inequality (and therefore our main Theorem) is also valid if $x_n$ is chosen in such a way that
$$ \sum_{k=1}^{n-1}{f(x_n - x_k)} \leq 0.$$
We observe that $f$ has mean value 0 and thus
$$ \int_{\mathbb{T}} \sum_{k=1}^{n-1}{f(x - x_k)}dx = 0$$
and it is always possible to choose a new element $x_n$ with this property (and, usually, there are many of those). However, presumably these elements can be chosen in rather terrible ways and there is no reason to expect these sequences $(x_n)_{n=1}^{\infty}$ to have particularly good distribution properties; it would seem our Theorem is close to optimal for these types of sequences though we do not know how to show this. It also shows the bottleneck in our current approach: we do not know how to make use of the fact that the algorithm chooses the minimal value and not merely a value not exceeding the expected value. The second remark concerns uniform distribution of the sequence $(x_n)_{n=1}^{\infty}$. We have the following fact.
\begin{corollary} If $\widehat{f}(k) > 0$ for all $k \neq 0$, then the sequence $x_n$ defined via
$$ x_n = \arg\min_x \sum_{k=1}^{n-1}{f(x-x_k)}.$$
is uniformly distributed on $\mathbb{T}$.
\end{corollary}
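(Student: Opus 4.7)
The plan is to apply Weyl's criterion, which reduces the claim to showing that for every nonzero integer $k$,
$$\frac{1}{n}\sum_{j=1}^n e^{2\pi i k x_j} \longrightarrow 0 \quad \text{as } n \to \infty.$$
Writing $S_n(k) := \sum_{j=1}^n e^{2\pi i k x_j}$, I will in fact aim for the stronger quantitative bound $|S_n(k)| = O(\sqrt{n})$ for each fixed $k \neq 0$. Since adding a constant to $f$ does not change the argmin, we may also assume $\widehat{f}(0) = 0$.

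The main ingredient is the linear-in-$n$ energy bound already used implicitly in the paper: the double sum
$$E_n := \sum_{j,\ell=1}^n f(x_j - x_\ell)$$
satisfies $E_n \leq C_0 + n f(0)$ for some constant $C_0$ depending only on the initial data. I would prove this by induction on $n$. Expanding one term at a time,
$$E_n = E_{n-1} + 2\, g_{n-1}(x_n) + f(0), \qquad g_{n-1}(x) := \sum_{k=1}^{n-1} f(x - x_k).$$
Because $f$ has mean zero, so does $g_{n-1}$, hence $\min_x g_{n-1}(x) \leq 0$. The greedy rule places $x_n$ at a minimizer, so $g_{n-1}(x_n) \leq 0$ and therefore $E_n \leq E_{n-1} + f(0)$; iterating gives the stated bound.

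Next, I would expand $E_n$ on the Fourier side. By Parseval,
$$E_n = \sum_{m \neq 0} \widehat{f}(m)\, |S_n(m)|^2,$$
and crucially every summand is nonnegative under the hypothesis $\widehat{f}(m) > 0$ for all $m \neq 0$. Dropping all terms but the one at $m = k$ gives
$$\widehat{f}(k)\, |S_n(k)|^2 \;\leq\; E_n \;\leq\; C_0 + n f(0),$$
whence $|S_n(k)| \leq \bigl((C_0 + n f(0))/\widehat{f}(k)\bigr)^{1/2} = O(\sqrt{n})$. Dividing by $n$ and invoking Weyl's criterion finishes the argument.

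The only real technicality is justifying the Parseval expansion for this $f$, since the corollary assumes little regularity beyond positivity of the Fourier coefficients. In practice the standing hypotheses of the paper (say $\widehat{f}\in \ell^1$, automatic when $\widehat{f}(k) \gtrsim |k|^{-2}$) make the identity immediate; in full generality one can recover it by testing against Fejér kernels and passing from trigonometric polynomials to $f$ by approximation. This is the one place that requires a small amount of care, but it is not a genuine obstacle — the proof is otherwise just Parseval plus positivity applied to the elementary greedy energy bound.
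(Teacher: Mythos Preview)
Your argument is correct and matches the paper's proof essentially line for line: bound the energy $E_n \le nf(0)$ via the greedy step, expand it in Fourier, use positivity to isolate a single frequency, and conclude by Weyl's criterion. One small slip in your aside: the lower bound $\widehat f(k)\gtrsim |k|^{-2}$ does not by itself force $\widehat f\in\ell^1$, but this is irrelevant to the main argument.
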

The argument is so short that we can give it right here.
\begin{proof} We have
\begin{align*}
 nf(0) \geq \sum_{m, \ell = 1}^{n}{f(x_m - x_{\ell})} = \sum_{k \in \mathbb{Z} \atop k \neq 0}{\widehat{f}(k) \left|\sum_{m = 1}^{n}e^{2\pi i k x_m}\right|^2  } \geq \widehat{f}(k) \left|\sum_{m = 1}^{n}e^{2\pi i k x_m}\right|^2 
\end{align*}
from which we obtain
$$ \frac{1}{n} \left|\sum_{m = 1}^{n}e^{2\pi i k x_m}\right| \leq  \sqrt{ \frac{f(0)}{\widehat{f}(k)} } \frac{1}{\sqrt{n}}.$$
This tends to 0 from which we obtain uniform distribution from Weyl's theorem.
\end{proof}
We emphasize that the argument also shows that the size of $\widehat{f}(k)$ will play a role in the quality of the distribution: if it decays very rapidly, the convergence rate might be quite slow.

\subsection{Higher dimensions.} The same phenomenon exists in higher dimensions and it does so at a great level of generality.  Indeed, the scaling in higher dimensions is fundamentally different and this allows us to obtain optimal results. Let $(M,g)$ be a smooth compact manifold without boundary. We use $\phi_k$ to denote the $L^2-$normalized eigenfunctions of the Laplace operator
$$ -\Delta \phi_k = \lambda_k \phi_k.$$
We will now define admissible kernels $K:M \times M \rightarrow \mathbb{R}$ to be functions of the form
$$ K(x,y) = \sum_{k=1}^{\infty}{ a_k \frac{ \phi_k(x) \phi_k(y)}{\lambda_k}}$$
where the coefficient $a_k$ is assumed to satisfy a two-sided bound:
$$ c_1 < a_k < c_2 \qquad \mbox{for all}~k \geq 1$$
and some positive constants $c_1, c_2$.
 We note that the sum starts at $k=1$ and thus excludes the trivial (constant) eigenfunction $\phi_0$. In particular, all these kernels have mean value 0. This definition is an extension of our assumption $\widehat{f}(k) \geq c |k|^{-2}$ in the one-dimensional setting.
A particularly natural kernel arises from setting $a_k = 1$ in which case we obtain the Green's function of the Laplacian $G(x,y)$. This function has the property that
$$ -\Delta_x \int_{M}{G(x,y) f(y) dy} = f(x),$$
i.e. it solves the equation $-\Delta u = f$. We will now consider sequences of the form
$$ x_n = \arg\min_{x \in M} \sum_{k=1}^{n-1}{K(x,x_k)}.$$

\begin{theorem} Let $x_n$ be a sequence obtained in such a way on a $d-$dimensional compact manifold. Then 
$$ W_2\left( \frac{1}{n} \sum_{k=1}^{n} \delta_{x_k} , dx \right) \lesssim_M \begin{cases}n^{-1/2} \sqrt{\log{n}} \qquad &\mbox{if}~d=2 \\
n^{-1/d} &\mbox{if}~d \geq 3. \end{cases}$$
\end{theorem}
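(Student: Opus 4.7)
The strategy is to adapt the one-dimensional argument of Theorem~1 to the manifold setting, with heat-kernel regularization to handle the diagonal singularity $K(x,x)=+\infty$ that appears in dimension $d\ge 2$. First, the greedy energy bound carries over essentially verbatim: since $K$ has mean zero in each variable, the potential $u_{n-1}(x)=\sum_{k<n}K(x,x_k)$ has mean zero on $M$, so $u_{n-1}(x_n)=\min_x u_{n-1}\le 0$. Symmetrizing over $n$ yields
$$\sum_{1\le k\ne\ell\le n}K(x_k,x_\ell)\le 0,$$
a finite quantity since the $x_k$ are distinct.

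Next, I would let $\mu_n^{(t)}=e^{t\Delta}\mu_n$ be the heat-flowed empirical measure and split
$$W_2(\mu_n,dx)\le W_2(\mu_n,\mu_n^{(t)})+W_2(\mu_n^{(t)},dx)\lesssim \sqrt{t}+W_2(\mu_n^{(t)},dx),$$
using the standard $\sqrt{t}$ transport cost of the heat flow. To bound the second summand, I would apply a Peyre-type inequality $W_2(\mu_n^{(t)},dx)\lesssim \|\mu_n^{(t)}-dx\|_{\dot H^{-1}(M)}$; this requires uniformly bounded density for $\mu_n^{(t)}$, which I would secure by first proving a separation estimate $d(x_k,x_\ell)\gtrsim n^{-1/d}$ (following from the greedy condition combined with the $|x-y|^{2-d}$ singularity of $K$ near the diagonal) and then choosing $t\gtrsim n^{-2/d}$. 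The $\dot H^{-1}$ norm has the spectral representation
$$\|\mu_n^{(t)}-dx\|_{\dot H^{-1}}^2=\sum_{j\ge 1}\lambda_j^{-1}e^{-2\lambda_j t}|\widehat{\mu_n}(j)|^2\le \frac{c_1^{-1}}{n^2}\sum_{k,\ell}K^{(t)}(x_k,x_\ell),$$
where $K^{(t)}$ is the heat-smoothed admissible kernel. Its diagonal obeys $K^{(t)}(x,x)\lesssim t^{1-d/2}$ for $d\ge 3$ (resp.\ $\log(1/t)$ for $d=2$) by classical heat-kernel asymptotics, contributing $t^{1-d/2}/n$ (resp.\ $\log(1/t)/n$) after division by $n^2$. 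Optimizing $t\sim n^{-2/d}$ then balances this against $\sqrt{t}$ and produces the claimed rates $n^{-1/d}$ and $n^{-1/2}\sqrt{\log n}$.

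The main obstacle is the off-diagonal control of $\sum_{k\ne\ell}K^{(t)}(x_k,x_\ell)$. A pointwise comparison $K^{(t)}\le K$ fails in general, so the greedy bound does not directly transfer to the smoothed kernel. Instead, one exploits that $K-K^{(t)}$ has nonnegative Fourier coefficients and is therefore positive semi-definite as a kernel; combined with a dyadic decomposition of the Laplace spectrum at scale $\lambda\sim 1/t$, absorbing the ``low-frequency'' modes into the greedy estimate while controlling the ``high-frequency'' ones by heat damping, this transfers control from $K$ to $K^{(t)}$ with error comparable to the diagonal contribution. Making this spectral split sharp enough to recover the optimal $n^{-1/d}$ rate (rather than something weaker that would survive the square root), and simultaneously establishing the separation lemma needed to justify Peyre's inequality, are the two main technical difficulties.
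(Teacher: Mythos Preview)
Your plan takes a genuinely different and much harder route than the paper. The paper's own proof of Theorem~3 is essentially a four-line reduction to two black-box results from \cite{stein3}: first, a Wasserstein--Green energy inequality valid for \emph{any} point set,
\[
W_2\Bigl(\tfrac{1}{n}\sum_{k=1}^n\delta_{x_k},dx\Bigr)\lesssim_M n^{-1/d}+\frac{1}{n}\Bigl|\sum_{k\ne\ell}G(x_k,x_\ell)\Bigr|^{1/2}
\]
(with $n^{-1/d}$ replaced by $n^{-1/2}\sqrt{\log n}$ when $d=2$); second, a \emph{universal lower bound} on the off-diagonal Green energy, also from \cite{stein3},
\[
\sum_{k\ne\ell}G(x_k,x_\ell)\gtrsim_M -n^{2-2/d}\qquad(\text{resp.}\ -n\log n\ \text{for }d=2).
\]
The greedy step contributes only the trivial observation $\sum_{k\ne\ell}G\le 0$. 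Combining the two cited bounds pins down $|\sum_{k\ne\ell}G|\lesssim n^{2-2/d}$, and the result follows immediately. The passage from a general admissible $K$ to $G$ is handled by the two-sided bound $c_1\le a_k\le c_2$.

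Your proposal, by contrast, is an attempt to prove the content of \cite{stein3} from scratch via heat-kernel regularization and a Peyre-type inequality. The two difficulties you flag are real, and the more serious one is the off-diagonal transfer: knowing only $\sum_{k\ne\ell}K\le 0$ does \emph{not} give an upper bound on $\sum_{k\ne\ell}K^{(t)}$, because positive semi-definiteness of $K-K^{(t)}$ controls the full quadratic form, and the diagonal of $K$ is $+\infty$. Your dyadic spectral split does not obviously close this gap: truncating to low frequencies still leaves you needing $\sum_{k\ne\ell}K_{\text{low}}\le\sum_{k\ne\ell}K$, which again fails pointwise. What actually rescues the argument in the paper's route is precisely the universal lower bound on $\sum_{k\ne\ell}G$, which you do not mention; it is this estimate (rather than any comparison of $K^{(t)}$ with $K$) that converts the one-sided greedy inequality into a two-sided size bound. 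The separation lemma you propose is also not needed in the paper's approach, since the Wasserstein inequality from \cite{stein3} already absorbs whatever regularization is required.
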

We note that this result is optimal for $d \geq 3$. We do not know whether the logarithmic factor is necessary for $d=2$. The main ingredient is a favorable estimate of the Wasserstein distance that was recently obtained by the second author \cite{stein3} that allows for a greedy formulation. We note that while the static case, the structure of point
sets minimizing the Green energy, has been an active field of study \cite{beltran, carlos, bet, ch, criado, garcia, lev, marzo, stein3}, we are not aware of results in the dynamic setting.

\begin{corollary} If $d \geq 3$, then there exists a sequence $(x_n)_{n=1}^{\infty}$ on $\mathbb{T}^d$ such that
$$ W_2\left( \frac{1}{n} \sum_{k=1}^{n} \delta_{x_k} , dx \right) \lesssim_d \frac{1}{n^{1/d}} \qquad \mbox{uniformly in}~n.$$
\end{corollary}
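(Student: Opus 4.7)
The plan is to obtain this statement as an immediate application of the preceding Theorem to the flat torus $M = \mathbb{T}^d$, which is a smooth compact Riemannian manifold without boundary of dimension $d$. The only thing to check is that $\mathbb{T}^d$ admits an admissible kernel in the sense required by the Theorem, and the natural candidate is the Green's function of the Laplacian itself.

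First I would recall that on $\mathbb{T}^d$ the eigenfunctions of the Laplace–Beltrami operator are the characters $\phi_k(x) = e^{2\pi i \langle k, x\rangle}$ with eigenvalues $\lambda_k = 4\pi^2 |k|^2$, and the Green's function admits the spectral representation
$$ G(x,y) \;=\; \sum_{k \neq 0} \frac{\phi_k(x)\,\overline{\phi_k(y)}}{\lambda_k}. $$
This series converges (in a suitable distributional sense, and pointwise off the diagonal) because for $d \geq 3$ the decay $\lambda_k^{-1} \sim |k|^{-2}$ is summable after accounting for the lattice counting in $\mathbb{Z}^d$. In the notation of the Theorem this corresponds to the choice $a_k \equiv 1$, so the two-sided bound $c_1 < a_k < c_2$ is trivially satisfied, and $G$ is an admissible kernel on $\mathbb{T}^d$.

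Next I would define the sequence $(x_n)_{n=1}^{\infty}$ on $\mathbb{T}^d$ by fixing any starting point $x_1 \in \mathbb{T}^d$ and iterating the greedy step
$$ x_n \;=\; \arg\min_{x \in \mathbb{T}^d} \sum_{k=1}^{n-1} G(x, x_k), $$
using the convention from the introduction that ties are broken arbitrarily. Since $G$ is continuous off the diagonal and tends to $+\infty$ on the diagonal (for $d \geq 3$, $G$ has the same short-range singularity as $|x-y|^{2-d}$), the minimizer exists on the compact manifold $\mathbb{T}^d$ and is in particular distinct from the previous $x_k$'s.

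Finally, applying the preceding Theorem with $M = \mathbb{T}^d$ and $K = G$, and using the dimension hypothesis $d \geq 3$ so that we fall in the second case of the bound, yields
$$ W_2\!\left( \frac{1}{n} \sum_{k=1}^{n} \delta_{x_k},\, dx\right) \;\lesssim_d\; \frac{1}{n^{1/d}} $$
uniformly in $n$, which is exactly the desired conclusion. Since this is a direct specialization of the Theorem, there is no genuine obstacle; the only point worth stressing is the admissibility of $G$ on $\mathbb{T}^d$, which is immediate from its spectral definition.
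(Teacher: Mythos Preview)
Your proposal is correct and is exactly the argument the paper intends: the Corollary is not given a separate proof in the paper but is stated as an immediate specialization of Theorem~3 to the compact manifold $M=\mathbb{T}^d$ with the Green's function (the canonical admissible kernel $a_k\equiv 1$), which is precisely what you do. The additional remarks you make about existence of the minimizer and the spectral representation of $G$ on $\mathbb{T}^d$ are accurate and only make the deduction more explicit.
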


This Corollary seems to be new: it gives a constructive proof that Wasserstein distance does not have an irregularities of distribution phenomenon in dimensions $d \geq 3$. We have the same result up to a factor of $\sqrt{\log{n}}$ in two dimensions. By Graham's result \cite{graham}, the loss of a factor of $\sqrt{\log{n}}$ is indeed necessary in $d=1$.

\section{Proofs}
\subsection{Proof of Theorem 1, Corollary 1 and Corollary 2.}
\begin{proof}
The proof decomposes into two parts. In the first part we argue exactly as in \cite{pausinger}. We can assume w.l.o.g. that $f$ has mean value 0.  We first observe that
$$ \sum_{m, \ell = 1}^{n}{f(x_m - x_{\ell})} \leq n f(0).  \qquad \qquad (\diamond)$$
which follows from the identity
$$\sum_{m, \ell = 1}^{n}{f(x_m - x_{\ell})} = n f(0) + 2 \sum_{m, \ell = 1\atop m < \ell}^{n}{f(x_m - x_{\ell})},$$
the reformulation
$$ \sum_{m, \ell = 1\atop m < \ell}^{n}{f(x_m - x_{\ell})} = \sum_{\ell=2}^{n}{ \sum_{m=1}^{\ell-1} f(x_{\ell} - x_m)}$$
and the greedy algorithm: by definition of $x_{\ell}$, we have
$$  \sum_{m=1}^{\ell-1} f(x_{\ell} - x_m) = \min_x  \sum_{m=1}^{\ell-1} f(x - x_m) \leq \int_{\mathbb{T}}  \sum_{m=1}^{\ell-1} f(x_{} - x_m) dx = 0.$$

 Rewriting quantities in terms of Fourier Analysis then shows that
\begin{align*}
 \sum_{m, \ell= 1}^{n}{f(x_m - x_{\ell}) }&=  \sum_{k \in \mathbb{Z}}{\widehat{f}(k) \sum_{m, \ell = 1}^{n}e^{2\pi i k(x_m - x_{\ell})}} \\
 &= \sum_{k \in \mathbb{Z}}{\widehat{f}(k) \left(\sum_{m = 1}^{n}e^{2\pi i k x_m}\right) \left(\sum_{m = 1}^{n}e^{2\pi i k(-x_m)}\right)  }\\
 &=  \sum_{k \in \mathbb{Z}}{\widehat{f}(k) \left(\sum_{m = 1}^{n}e^{2\pi i k x_m}\right) \overline{\left(\sum_{m = 1}^{n}e^{2\pi i kx_m}\right) } }\\
 &=  \sum_{k \in \mathbb{Z}}{\widehat{f}(k) \left|\sum_{m = 1}^{n}e^{2\pi i k x_m}\right|^2 }.
 \end{align*}

We first use this fact to establish the statement of the Corollary 1. This corollary was suggested to us by Igor Shparlinski, and we are grateful to be able to incorporate it here. 
Note that
\begin{align*}
\sum_{\ell=1}^{n}{f(x-x_{\ell})} = \sum_{k \in \mathbb{Z}} \widehat{f}(k) \left( \sum_{\ell=1}^{n}{e^{-2 \pi i k x_{\ell}}} \right) e^{2 \pi i k x}
\end{align*}
and thus, using the Cauchy-Schwarz inequality,
\begin{align*}
\left\| \sum_{\ell=1}^{n}{f(x-x_{\ell})} \right\|_{L^{\infty}} &\leq  \sum_{k \in \mathbb{Z}} \widehat{f}(k) \left| \sum_{\ell=1}^{n}{e^{-2 \pi i k x_{\ell}}} \right| \\
&=   \sum_{k \in \mathbb{Z}} \widehat{f}(k)^{1/2} \widehat{f}(k)^{1/2}   \left| \sum_{\ell=1}^{n}{e^{-2 \pi i k x_{\ell}}} \right|  \\
&\le \left( \sum_{k \in \mathbb{Z}} \widehat{f}(k) \right)^{1/2}  \left( \sum_{k \in \mathbb{Z}}{ \widehat{f}(k)\left|\sum_{m = 1}^{n}{e^{2\pi i k x_m}}\right|^2  } \right)^{1/2} \\
&=\sqrt{f(0)} \left( \sum_{m, \ell= 1}^{n}{f(x_m - x_{\ell}) } \right)^{1/2}.
\end{align*}

 Coupled with the inequality ($\diamond$) above, we obtain
 $$\left\| \sum_{\ell=1}^{n}{f(x-x_{\ell})} \right\|_{L^{\infty}} \le f(0)\sqrt{n}\lesssim \sqrt n,$$
 which was the desired statement.  To prove Theorem 1 and Corollary 2, we may further assume $\widehat f(k)\ge c|k|^{-2}$ for all $k\ne 0$, and thus
\begin{align*}
 n^2   \sum_{k \in \mathbb{Z} \atop k \neq 0}{\frac{c}{k^2} \left|\frac{1}{n}\sum_{m = 1}^{n}e^{2\pi i kx_m}\right|^2} &
 \le \sum_{k \in \mathbb{Z}}{\widehat f(k) \left|\sum_{m = 1}^{n}e^{2\pi i kx_m}\right|^2}\\
 &=\sum_{m,\ell=1}^nf(x_m-x_\ell)\le nf(0),
 \end{align*}
  so we have
 
 $$ \sum_{k \in \mathbb{Z} \atop k \neq 0}{\frac{c}{k^2} \left|\frac{1}{n}\sum_{m = 1}^{n}e^{2\pi i k x_m}\right|^2  } \leq \frac{f(0)}{n}.$$
Reformulating, 
$$   \left( \sum_{k \in \mathbb{Z} \atop k \neq 0}{\frac{1}{k^2} \left|\frac{1}{n}\sum_{m = 1}^{n}{e^{2\pi i k x_m}}\right|^2  } \right)^{1/2} \lesssim_{c, f(0)} \frac{1}{\sqrt{n}}.$$
 We note that this last argument has been previously stated in the literature in a very different context (integration error of periodic functions in terms of Zinterhof's diaphony \cite{zinterhof}) in a paper of Zinterhof \& Stegbuchner \cite{zinterhof2}. It remains to prove the second Corollary and Theorem 1.
For that we use an estimate of Peyr\'e \cite{peyre} (the one-dimensional case of Peyr\'e's inequality is an identity and can also be found in \cite[Exercise 64]{santa}): this estimate states that,
for any measure $\mu$ on $\mathbb{T}$
$$ W_2(\mu, dx) \lesssim \|\mu\|_{\dot H^{-1}} = \left( \sum_{k=1}^{\infty}{ \frac{|\widehat{\mu}(k)|^2}{k^2} }\right)^{1/2}.$$
We apply this estimate to the measure
$$ \mu = \frac{1}{n} \sum_{k=1}^{n}{\delta_{x_k}}$$
to obtain
$$ W_2\left( \frac{1}{n} \sum_{k=1}^{n}{\delta_{x_k}}, dx\right) \lesssim\left( \sum_{k=1}^{\infty}{ \frac{1}{k^2} \left| \frac{1}{n}\sum_{\ell=1}^{n}{e^{2 \pi i k x_{\ell}}} \right|^2 }\right)^{1/2} \lesssim \frac{1}{\sqrt{n}}.$$
This establishes Theorem 1. Corollary 2 follows from remarking that, as seen above,
$$
 \frac{1}{n^2}\sum_{m, \ell= 1}^{n}{f(x_m - x_{\ell}) } =\sum_{k \in \mathbb{Z}}{\widehat{f}(k) \left|\frac{1}{n}\sum_{m = 1}^{n}e^{2\pi i k x_m}\right|^2 }.$$
Moreover, since $\widehat{f}(k) \geq c|k|^{-2}$ for $k \neq 0$ and $\widehat{f}(0)=0$, we can bound this quantity from below by
$$ \sum_{k \in \mathbb{Z}}{\widehat{f}(k) \left|\frac{1}{n}\sum_{m = 1}^{n}e^{2\pi i k x_m}\right|^2 } \gtrsim
\sum_{k \neq 0}{\frac{1}{|k|^2} \left|\frac{1}{n}\sum_{m = 1}^{n}e^{2\pi i k x_m}\right|^2 }.$$
This, in turn, is the $\dot H^{-1}-$norm of $\mu$ and therefore, from another application of Peyr\'{e}'s inequality,
$$ \sum_{k \neq 0}{\frac{1}{|k|^2} \left|\frac{1}{n}\sum_{m = 1}^{n}e^{2\pi i k x_m}\right|^2 } \gtrsim W_2(\mu, dx).$$
\end{proof}

\section{Proof of Theorem 2}
\begin{lemma} We have
$$ \left\| \sum_{k=1}^{n} f(x-x_k) \right\|_{L^1} \leq 2f(0) \qquad \emph{for infinitely many}~n.$$
\end{lemma}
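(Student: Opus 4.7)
The plan is to control $\|f_n\|_{L^1}$ by the absolute value of the greedy minimum and then show that this minimum is small, on average, over $n$. Writing $f_n(x) = \sum_{k=1}^{n} f(x-x_k)$ and $m_n := \min_{x \in \mathbb{T}} f_n(x)$, the facts I will use are: (i) $f$ has mean zero, so $f_n$ does too, hence $m_n \leq 0$; (ii) the greedy rule says $m_n = f_n(x_{n+1})$; and (iii) $\widehat{f}(k) \geq 0$, so the double sum $\sum_{k,\ell} f(x_k - x_\ell)$ is non-negative. The only step not already contained in the proof of Theorem 1 is a triangle-inequality trick that converts an $L^1$ bound into a pointwise bound at the greedy minimizer.

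For the first step, I would note that $f_n - m_n \geq 0$ pointwise and has integral $-m_n = |m_n|$, so $\|f_n - m_n\|_{L^1} = |m_n|$. The triangle inequality then yields
$$ \|f_n\|_{L^1} \leq \|f_n - m_n\|_{L^1} + |m_n| = 2|m_n|. $$
Thus the lemma reduces to producing infinitely many $n$ with $|m_n| \leq f(0)$.

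For the second step, I average $m_n$ in $n$. By the greedy rule,
$$ \sum_{n=1}^{N} m_n \;=\; \sum_{n=1}^{N} \sum_{k=1}^{n} f(x_{n+1}-x_k) \;=\; \sum_{1 \leq k < \ell \leq N+1} f(x_\ell - x_k) \;=\; \tfrac{1}{2}\!\left( \sum_{k,\ell=1}^{N+1} f(x_\ell - x_k) \;-\; (N+1)\,f(0) \right). $$
The Parseval identity from the proof of Theorem 1 expresses the bracketed double sum as $\sum_{k \in \mathbb{Z}} \widehat{f}(k)\,|\sum_m e^{2\pi i k x_m}|^{2}$, which is non-negative since every $\widehat{f}(k) \geq 0$. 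Hence
$$ \sum_{n=1}^{N} |m_n| \;=\; -\sum_{n=1}^{N} m_n \;\leq\; \tfrac{N+1}{2}\,f(0). $$

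Finally, if $|m_n| > f(0)$ held for every $n$ beyond some $n_0$, the partial sums would grow like $N f(0)$, contradicting the upper bound $\tfrac{N+1}{2} f(0)$. Therefore $|m_n| \leq f(0)$ for infinitely many $n$, and the first step gives $\|f_n\|_{L^1} \leq 2 f(0)$ for each such $n$. The genuinely new ingredient—and the place where the greedy selection is essential—is the reduction $\|f_n\|_{L^1} \leq 2|m_n|$; the remainder is a clean consequence of the same double-sum identity $(\diamond)$ already used earlier in the paper, so I do not expect any serious obstacle.
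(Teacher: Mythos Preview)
Your proof is correct and follows essentially the same approach as the paper. Both arguments rest on the same two ingredients: the reduction $\|f_n\|_{L^1}\le 2|m_n|$ (the paper phrases it contrapositively as $m_n\le -\tfrac12\|f_n\|_{L^1}$ via the mean-zero ``pigeonhole'' observation), and the nonnegativity of the double sum $\sum_{k,\ell}f(x_k-x_\ell)$ combined with the greedy identity $m_n=f_n(x_{n+1})$. The only cosmetic difference is that you sum the telescoping relation $S_{n+1}-S_n=f(0)+2m_n$ to obtain an averaged bound $\sum_{n\le N}|m_n|\le\tfrac{N+1}{2}f(0)$, whereas the paper argues step by step that whenever $\|f_n\|_{L^1}>2f(0)$ the energy $S_n$ drops by at least $f(0)$, which cannot persist since $S_n\ge 0$.
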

\begin{proof}Let us suppose that the inequality fails for some fixed $n$. Then, since
$$\int_{ \mathbb{T}}  \sum_{k=1}^{n} f(x-x_k) dx = 0,$$
we have that the positive mass and the negative mass cancel and thus, by pigeonholing,
$$ \min_{x \in \mathbb{T}}  \sum_{k=1}^{n} f(x-x_k) \leq -\frac{1}{2}\left\| \sum_{k=1}^{n} f(x-x_k)\right\|_{L^1} \leq -f(0).$$
This, in turn, then implies that
\begin{align*}
\sum_{k, \ell=1}^{n+1} f(x_k - x_{\ell}) &=  \sum_{k, \ell=1}^{n} f(x_k - x_{\ell}) + f(0) + 2 \sum_{k=1}^{n}{f(x_{n+1} - x_k)}\\
&\leq   \sum_{k, \ell=1}^{n} f(x_k - x_{\ell}) - f(0)
\end{align*}
and we see that the quantity is decaying since $f(0) > 0$.
However, the quantity cannot decay indefinitely since
$$ \sum_{k, \ell=1}^{n} f(x_k - x_{\ell})  = \sum_{k \in \mathbb{Z} \atop k \neq 0}{\widehat{f}(k) \left|\sum_{m = 1}^{n}e^{2\pi i k x_m}\right|^2  } \geq 0.$$
This means that the desired inequality has to eventually be true. The argument shows slightly more: since
$$   \sum_{k, \ell=1}^{n} f(x_k - x_{\ell})  \leq f(0)n,$$
we can infer that if the $L^1-$norm is bigger than $2f(0)$ for some fixed $n$, then it holds true for some $m \leq 2n$. However, we will not need this refined information.
\end{proof}

\begin{proof}[Proof of Theorem 2] 

We now fix such a value of $n$ where the $L^1-$norm is smaller than $2f(0)$. We argue that
\begin{align*}
 \left\| \frac{d}{dx} \sum_{m=1}^{n} f(x-x_m) \right\|_{L^2} &=  \left\| \frac{d}{dx} \sum_{k \in \mathbb{Z}}^{} \widehat{f}(k) \sum_{m = 1}^{n}e^{2\pi i k (x-x_m)} \right\|_{L^2} \\
 &\lesssim  \left\| \frac{d}{dx} \sum_{k \in \mathbb{Z}}^{} \frac{1}{k^2} \sum_{m = 1}^{n}e^{2\pi i k (x-x_m)} \right\|_{L^2}\\
&\lesssim  \left\|  \sum_{k \in \mathbb{Z}}^{} \frac{1}{k} \sum_{m = 1}^{n}e^{2\pi i k (x-x_m)} \right\|_{L^2}\\
&= \left( \sum_{k \neq 0} \frac{1}{k^2} \left| \sum_{m = 1}^{n}e^{2\pi i k x_m} \right|^2 \right)^{1/2} \\
&\lesssim \left(  \sum_{k, \ell=1}^{n} f(x_k - x_{\ell})\right)^{1/2}  \lesssim n^{1/2}.
\end{align*}
The final ingredient in our argument is a Gagliardo-Nirenberg inequality: for differentiable $g:\mathbb{T} \rightarrow \mathbb{R}$ with mean value 0, we have
$$ \| g\|_{L^{\infty}(\mathbb{T})} \lesssim \left\| \frac{d}{dx} g\right\|_{L^2(\mathbb{T})}^{2/3} \|g\|_{L^1(\mathbb{T})}^{1/3},$$
which establishes the desired result.
\end{proof}

\subsection{Proof of the Proposition}
\begin{proof} We have
 $$\sum_{m, \ell= 1}^{n}{f(x_m - x_{\ell}) } = \sum_{k \in \mathbb{Z} \atop k \neq 0}{\widehat{f}(k) \left|\sum_{m = 1}^{n}e^{2\pi i k x_m}\right|^2  } \lesssim  \sum_{k =1}^{\infty}{ \frac{1}{k^2} \left|\sum_{m = 1}^{n}e^{2\pi i k x_m}\right|^2  }.$$
This quantity was already estimated in \cite{stein0}, we recall the argument for the convenience of the reader. We observe that, trivially,
$$ \sum_{k =n^2}^{\infty}{  \frac{1}{k^2} \left|\sum_{m = 1}^{n}e^{2\pi i k x_m}\right|^2  } \leq n^2 \sum_{k=n^2}^{\infty}\frac{1}{k^2} \lesssim 1.$$
It thus remains to estimate the first $n^2$ sums. We split these sums into dyadic pieces and estimate
$$  \sum_{2^{\ell} \leq k \leq 2^{\ell+1}}^{}{ \frac{1}{k^2} \left|\sum_{m = 1}^{n}e^{2\pi i k x_m}\right|^2  } \lesssim \frac{1}{2^{2\ell}}   \sum_{2^{\ell} \leq k \leq 2^{\ell+1}}^{}{  \left|\sum_{m = 1}^{n}e^{2\pi i k x_m}\right|^2  }.$$
We recall the geometric series and use it to estimate
$$ \left|  \sum_{m=1}^{n}{e^{2 \pi i k m \alpha}} \right| = \left| \frac{e^{2\pi i k n \alpha } - 1}{e^{2\pi i k \alpha} - 1} \right| \leq \frac{2}{\left| e^{2\pi i k \alpha} - 1 \right|} \lesssim \frac{1}{ \left\| k \alpha \right\|},$$ 
where $\| x \| = \min(x-\left\lfloor x \right\rfloor, \left\lceil x \right\rceil - x)$ is the distance to the nearest integer. Since $\alpha$ is badly approximable, i.e.
$$ \left| \alpha - \frac{p}{q} \right| \geq \frac{c_{\alpha}}{q^2},$$
we have that, for any $2^{\ell} \leq k_1< k_2 \leq 2^{\ell+1}$, 
$$ \left| \|k_1 \alpha\| - \|k_2 \alpha\| \right| \geq \frac{c_\alpha}{2^{\ell+1}}.$$
Moreover, we also have
$$ \frac{c_\alpha}{2^{\ell+1}} \leq  \|k_1 \alpha\| ,  \|k_2 \alpha\|  \leq 1-  \frac{c_\alpha}{2^{\ell+1}}.$$
This shows that the sum
 $$ \sum_{2^{\ell} \leq k \leq 2^{\ell+1}}^{}{  \left|\sum_{m = 1}^{n}e^{2\pi i k x_m}\right|^2  } \lesssim   \sum_{2^{\ell} \leq k \leq 2^{\ell+1}}^{}{  \frac{1}{\|k\alpha\|^2}}$$
 can be estimated from above by
 $$ \sum_{2^{\ell} \leq k \leq 2^{\ell+1}}^{}{  \frac{1}{\|k\alpha\|^2}} \lesssim_{\alpha} \sum_{k=1}^{2^{\ell}}{ \frac{1}{ (k/2^{\ell})^2} }\lesssim 2^{2\ell}.$$
 Altogether this shows that over every dyadic block
 $$  \sum_{2^{\ell} \leq k \leq 2^{\ell+1}}^{}{ \frac{1}{k^2} \left|\sum_{m = 1}^{n}e^{2\pi i k x_m}\right|^2  } \lesssim \frac{1}{2^{2\ell}} \lesssim 1$$
 and thus the sum simplifies to the number of dyadic blocks up to $n^2$ which is $\sim \log{n}$.
\end{proof}

\subsection{Proof of Theorem 3}
\begin{proof} We can see that $K$ is positive-definite and equivalent to the Green's function $G$. Thus, it suffices to prove the desired result for the Green's function $G$ instead. The proof follows by induction from the main result of \cite{stein3}. Fixing a $d-$dimensional manifold $(M,g)$ with $d\geq 3$, we have for any set of $n$ points
$\left\{x_1, \dots, x_n\right\} \subset M$ that
$$ W_2\left( \frac{1}{n} \sum_{k=1}^{n}{\delta_{x_k}}, dx\right) \lesssim_M  \frac{1}{n^{1/d}} +  \frac{1}{n} \left| \sum_{k \neq \ell} G(x_k, x_{\ell})\right|^{1/2}.$$
If the manifold is two-dimensional, $d=2$, then we have
$$ W_2\left( \frac{1}{n} \sum_{k=1}^{n}{\delta_{x_k}}, dx\right) \lesssim_M  \frac{\sqrt{\log{n}}}{n^{1/2}} +  \frac{1}{n} \left| \sum_{k \neq \ell} G(x_k, x_{\ell})\right|^{1/2}.$$
We emphasize that $G(\cdot, y)$ has mean value 0 and thus, by the usual argument, we obtain that
$$ \min_{x \in M} \sum_{k=1}^{n-1}{G(x,x_k)} \leq 0$$
and thus
$$ \sum_{k \neq \ell} G(x_k, x_{\ell}) \leq 0.$$
We recall the Corollary from \cite{stein3} which implies that for \textit{any} set of points
$$\sum_{k, \ell =1 \atop k \neq \ell}^{n} G(x_k, x_{\ell}) \gtrsim_M - n^{2-2/d}$$
for $d \geq 3$. 
If the manifold is two-dimensional, then we have the estimate
$$\sum_{k, \ell =1 \atop k \neq \ell}^{n} G(x_k, x_{\ell}) \gtrsim_M - n^{} \log{n}.$$
These two results combined imply the desired statement.
\end{proof}

\textbf{Acknowedgments.} Part of this work was carried out while the second author was attending Dagstuhl Seminar 12391 (`Algorithms and Complexity for Continuous Problems'), he is grateful to Vladimir Temlyakov for valuable discussions. The authors are also grateful to Igor Shparlinski for helpful comments and for suggesting one of the Corollary 1 as well as its proof.

\end{document}